\documentclass[12pt,a4paper]{amsart}
\usepackage{amsmath,amsfonts,amssymb}
\usepackage{setspace}

\usepackage{longtable}
\usepackage[mathscr]{eucal}
\usepackage{amsmath, amsthm}
\usepackage{mathrsfs}
\usepackage{amsbsy}
\usepackage{dsfont}
\usepackage{bbm}
\usepackage{wasysym}
\usepackage{stmaryrd}
\usepackage{url}

\theoremstyle{plain}
\usepackage{color}
\usepackage[breaklinks]{hyperref}

\input xypic
\xyoption{all}






\makeindex
\makeglossary

\begin{document}
\baselineskip = 16pt
\onehalfspacing

\newcommand \ZZ {{\mathbb Z}}
\newcommand \NN {{\mathbb N}}
\newcommand \RR {{\mathbb R}}
\newcommand \PR {{\mathbb P}}
\newcommand \AF {{\mathbb A}}
\newcommand \GG {{\mathbb G}}
\newcommand \QQ {{\mathbb Q}}
\newcommand \CC {{\mathbb C}}
\newcommand \bcA {{\mathscr A}}
\newcommand \bcC {{\mathscr C}}
\newcommand \bcD {{\mathscr D}}
\newcommand \bcF {{\mathscr F}}
\newcommand \bcG {{\mathscr G}}
\newcommand \bcH {{\mathscr H}}
\newcommand \bcM {{\mathscr M}}
\newcommand \bcI {{\mathscr I}}
\newcommand \bcJ {{\mathscr J}}
\newcommand \bcK {{\mathscr K}}
\newcommand \bcL {{\mathscr L}}
\newcommand \bcO {{\mathscr O}}
\newcommand \bcP {{\mathscr P}}
\newcommand \bcQ {{\mathscr Q}}
\newcommand \bcR {{\mathscr R}}
\newcommand \bcS {{\mathscr S}}
\newcommand \bcV {{\mathscr V}}
\newcommand \bcU {{\mathscr U}}
\newcommand \bcW {{\mathscr W}}
\newcommand \bcX {{\mathscr X}}
\newcommand \bcY {{\mathscr Y}}
\newcommand \bcZ {{\mathscr Z}}
\newcommand \goa {{\mathfrak a}}
\newcommand \gob {{\mathfrak b}}
\newcommand \goc {{\mathfrak c}}
\newcommand \gom {{\mathfrak m}}
\newcommand \gon {{\mathfrak n}}
\newcommand \gop {{\mathfrak p}}
\newcommand \goq {{\mathfrak q}}
\newcommand \goQ {{\mathfrak Q}}
\newcommand \goP {{\mathfrak P}}
\newcommand \goM {{\mathfrak M}}
\newcommand \goN {{\mathfrak N}}
\newcommand \uno {{\mathbbm 1}}
\newcommand \Le {{\mathbbm L}}
\newcommand \Spec {{\rm {Spec}}}
\newcommand \Gr {{\rm {Gr}}}
\newcommand \Pic {{\rm {Pic}}}
\newcommand \Jac {{{J}}}
\newcommand \Alb {{\rm {Alb}}}
\newcommand \Corr {{Corr}}
\newcommand \Chow {{\mathscr C}}
\newcommand \Sym {{\rm {Sym}}}
\newcommand \Prym {{\rm {Prym}}}
\newcommand \cha {{\rm {char}}}
\newcommand \eff {{\rm {eff}}}
\newcommand \tr {{\rm {tr}}}
\newcommand \Tr {{\rm {Tr}}}
\newcommand \pr {{\rm {pr}}}
\newcommand \ev {{\it {ev}}}
\newcommand \cl {{\rm {cl}}}
\newcommand \interior {{\rm {Int}}}
\newcommand \sep {{\rm {sep}}}
\newcommand \td {{\rm {tdeg}}}
\newcommand \alg {{\rm {alg}}}
\newcommand \im {{\rm im}}
\newcommand \gr {{\rm {gr}}}
\newcommand \op {{\rm op}}
\newcommand \Hom {{\rm Hom}}
\newcommand \Hilb {{\rm Hilb}}
\newcommand \Sch {{\mathscr S\! }{\it ch}}
\newcommand \cHilb {{\mathscr H\! }{\it ilb}}
\newcommand \cHom {{\mathscr H\! }{\it om}}
\newcommand \colim {{{\rm colim}\, }} 
\newcommand \End {{\rm {End}}}
\newcommand \coker {{\rm {coker}}}
\newcommand \id {{\rm {id}}}
\newcommand \van {{\rm {van}}}
\newcommand \spc {{\rm {sp}}}
\newcommand \Ob {{\rm Ob}}
\newcommand \Aut {{\rm Aut}}
\newcommand \cor {{\rm {cor}}}
\newcommand \Cor {{\it {Corr}}}
\newcommand \res {{\rm {res}}}
\newcommand \red {{\rm{red}}}
\newcommand \Gal {{\rm {Gal}}}
\newcommand \PGL {{\rm {PGL}}}
\newcommand \Bl {{\rm {Bl}}}
\newcommand \Sing {{\rm {Sing}}}
\newcommand \spn {{\rm {span}}}
\newcommand \Nm {{\rm {Nm}}}
\newcommand \inv {{\rm {inv}}}
\newcommand \codim {{\rm {codim}}}
\newcommand \Div{{\rm{Div}}}
\newcommand \CH{{\rm{CH}}}
\newcommand \sg {{\Sigma }}
\newcommand \DM {{\sf DM}}
\newcommand \Gm {{{\mathbb G}_{\rm m}}}
\newcommand \tame {\rm {tame }}
\newcommand \znak {{\natural }}
\newcommand \lra {\longrightarrow}
\newcommand \hra {\hookrightarrow}
\newcommand \rra {\rightrightarrows}
\newcommand \ord {{\rm {ord}}}
\newcommand \Rat {{\mathscr Rat}}
\newcommand \rd {{\rm {red}}}
\newcommand \bSpec {{\bf {Spec}}}
\newcommand \Proj {{\rm {Proj}}}
\newcommand \pdiv {{\rm {div}}}
\newcommand \wt {\widetilde }
\newcommand \ac {\acute }
\newcommand \ch {\check }
\newcommand \ol {\overline }
\newcommand \Th {\Theta}
\newcommand \cAb {{\mathscr A\! }{\it b}}

\newenvironment{pf}{\par\noindent{\em Proof}.}{\hfill\framebox(6,6)
\par\medskip}

\newtheorem{theorem}[subsection]{Theorem}
\newtheorem{conjecture}[subsection]{Conjecture}
\newtheorem{proposition}[subsection]{Proposition}
\newtheorem{lemma}[subsection]{Lemma}
\newtheorem{remark}[subsection]{Remark}
\newtheorem{remarks}[subsection]{Remarks}
\newtheorem{definition}[subsection]{Definition}
\newtheorem{corollary}[subsection]{Corollary}
\newtheorem{example}[subsection]{Example}
\newtheorem{examples}[subsection]{examples}

\title{Bloch's conjecture on surfaces with $p_g=q=0, K^2=9$}
\author{Kalyan Banerjee}
\email{kalyan.ba@srmap.edu.in}
\maketitle
\begin{abstract}
    In this paper, we prove that Bloch's conjecture holds for all smooth, complex, projective surfaces with $p_g=q=0$ and $K^2=9$.
\end{abstract}
\section{Introduction}
In \cite{M} Mumford has proved that if the geometric genus of a smooth projective surface is greater than zero, then the Chow group of zero cycles on the surface is not finite-dimensional in the sense that the natural map from the symmetric powers of the surface to the Chow group is not surjective. It means that the albanese kernel is non-trivial and enormous and cannot be parametrized by an algebraic variety.  The converse is whether for a surface with geometric genus zero the Chow group of zero cycles is supported at one point, provided that the irregularity of the surface is zero. This conjecture was originally made by Spencer Bloch. It is known for surfaces not of general type with geometric genus equal to zero due to \cite{BKL}. After that, the conjecture was verified for some examples of surfaces of general type with geometric genus zero due to \cite{Ba}, \cite{B}, \cite{IM}, \cite{GP}, \cite{PW} 
\cite{V}, \cite{VC}.

 The aim of this manuscript is to prove Bloch's conjecture for surfaces of general type with geometric genus and zero irregularity, and $K^2=9$. Such surfaces were discovered in \cite{M1}.
 
 Our main theorem in this paper is as follows.

\begin{theorem}
Consider a smooth, projective, minimal surface of general type $X$ with $p_g=q=0$ and $K^2=9$ over $\CC$. Then the Bloch conjecture holds for $X$.
\end{theorem}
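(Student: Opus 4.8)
\textbf{Recognition and reduction to a transcendental motive.}
Since $X$ is minimal of general type with $K_X^2 = 9$, Noether's formula gives $c_2(X) = 12\chi(\bcO_X) - K_X^2 = 12 - 9 = 3$ (using $\chi(\bcO_X) = 1 - q + p_g = 1$), so $c_1(X)^2 = 3c_2(X)$: equality holds in the Bogomolov--Miyaoka--Yau inequality, and by Yau and Miyaoka the universal cover of $X$ is the unit ball $\mathbb{B}^2\subset\CC^2$, i.e. $X = \mathbb{B}^2/\Gamma$ is a \emph{fake projective plane}. In particular $b_1 = 0$, $b_2 = 1$, the N\'eron--Severi group has rank $1$, $p_g = 0$, and every automorphism of $X$ acts as the identity on $H^\ast(X,\QQ)$. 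Writing $T(X) = \ker\!\big(\CH_0(X)_{\deg 0}\to\Alb(X)\big)$, Bloch's conjecture for $X$ amounts to $T(X) = 0$. By Murre's refined Chow--K\"unneth decomposition together with $q = 0$ and $\rho = b_2 = 1$, the Chow motive of $X$ is $\mathfrak{h}(X) = \uno\oplus\Le\oplus\Le^2\oplus\mathfrak{h}_2^{\tr}(X)$, with $\mathfrak{h}_2^{\tr}(X)$ of \emph{zero} cohomological realization. The plan is to prove that $\CH_0(X)\otimes\QQ$ is supported on a curve $D\subset X$: by the Bloch--Srinivas decomposition of the diagonal this exhibits $\mathfrak{h}_2^{\tr}(X)_\QQ$ as a direct summand of a Tate twist of $\mathfrak{h}_1(\wt D)$ for a resolution $\wt D$ of $D$, hence a finite-dimensional motive in the sense of Kimura--O'Sullivan; having vanishing realization, it must then be zero, so $T(X)\otimes\QQ = 0$, and since $q = 0$ Roitman's theorem forces $T(X)$ torsion-free, whence $T(X) = 0$.

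\textbf{The case of a nontrivial automorphism group.}
By the Prasad--Yeung classification and the Cartwright--Steger enumeration, $X$ is one of finitely many surfaces, and $\Aut(X)$ is $1$, $\ZZ/3$, $\ZZ/7$, $(\ZZ/3)^2$, or $\ZZ/7\rtimes\ZZ/3$. Suppose $\sigma\in\Aut(X)$ has prime order $p$, put $Y = X/\langle\sigma\rangle$ and $\pi\colon X\to Y$. Then $Y$ has at worst quotient (hence rational) singularities, so $\CH_0(Y)\cong\CH_0(\wt Y)$ for a resolution $\wt Y$; and from the known structure of quotients of fake projective planes, $\wt Y$ is either a rational surface or a minimal surface of general type with $p_g = q = 0$ and $K^2 \le 3$ for which Bloch's conjecture is available (numerical Godeaux, Campedelli or Catanese-type surfaces treated in \cite{V}, \cite{VC}, \cite{IM}, \cite{GP}, \cite{PW}, or handled by the same methods), so $\CH_0(Y)\otimes\QQ = \QQ$. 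Because $\sigma^\ast$ is the identity on $H^{2,0}(X) = 0$ and $H^{1,0}(X) = 0$, the next step is to establish the generalized Bloch conjecture for $\sigma$, i.e. $\sigma_\ast = \id$ on $T(X)\otimes\QQ$, by a Voisin-type spreading argument adapted to this (rigid) ball-quotient setting. Granting it: for a degree-$0$ cycle $z$ supported off the ramification one has $\pi^\ast\pi_\ast z = \sum_{i=0}^{p-1}\sigma^i_\ast z = p\,z$ in $\CH_0(X)\otimes\QQ$, while $\pi_\ast z = 0$, so $z = 0$; thus $\CH_0(X)\otimes\QQ$ is supported on the ramification curve and the first paragraph concludes. (When $\Aut(X)$ is $(\ZZ/3)^2$ or $\ZZ/7\rtimes\ZZ/3$ one can sidestep the generalized Bloch input: running the transfer for the several proper subgroups $H$ shows that $T(X)\otimes\QQ$ has no $H$-invariants for each $H$, and an elementary $\QQ[\Aut(X)]$-module computation then forces $T(X)\otimes\QQ = 0$.)

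\textbf{The case of trivial automorphism group, and the main obstacle.}
When $\Aut(X) = 1$ no quotient of $X$ itself can be used, and one must instead exploit that $\Gamma$ is arithmetic, hence contained with finite index in a maximal arithmetic lattice $\ol\Gamma$ of Prasad--Yeung and possessing a dense commensurator (Margulis). The proposal is to use the (generally non-Galois) finite cover $X\to\mathbb{B}^2/\ol\Gamma$ of the coarse quotient -- whose resolution has very small invariants and for which Bloch's conjecture is known -- together with the self-correspondences of $X$ furnished by the Hecke operators attached to $\mathrm{Comm}(\Gamma)$; passing to the Galois closure $\wt X\to\mathbb{B}^2/\ol\Gamma$, the vanishing of the full-Galois-group invariants of $T(\wt X)\otimes\QQ$, combined with the action of these Hecke correspondences on the remaining isotypic components, should force $T(X)\otimes\QQ = 0$, after which the first paragraph finishes the proof for all fake projective planes, hence for all surfaces with $p_g = q = 0$, $K^2 = 9$. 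I expect the genuine obstacles to be: (i) the explicit, classification-based verification that every quotient surface occurring in the second paragraph lies on the list of surfaces for which Bloch's conjecture is established; (ii) proving $\sigma_\ast = \id$ on $T(X)\otimes\QQ$ for the relevant automorphisms, since the usual spreading-of-the-diagonal argument deforms the surface in a family while fake projective planes are infinitesimally rigid; and, most seriously, (iii) the fake projective planes with trivial automorphism group, where the required vanishing must be extracted from the arithmetic structure alone and one must control the action of Hecke correspondences on $T(X)\otimes\QQ$ -- this last point is the heart of the matter.
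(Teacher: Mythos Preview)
Your proposal takes a completely different route from the paper and leaves genuine gaps that you flag but do not close. The paper makes no use of the ball-quotient description, the Cartwright--Steger list, $\Aut(X)$, or Hecke operators. Instead it argues directly with linear systems: since $\dim|nK_X|=g-1-9n$ (where $g$ is the genus of a curve in $|nK_X|$), through $g-1-9n$ general points of $X$ there passes a unique $C_t\in|nK_X|$, so $\Theta\colon X^{g-1-9n}\to A_0(X)$ factors through the Jacobian fibration $\bcJ\to|nK_X|$; routing blocks of $90$ of the points through curves in $|5K_X|$ (genus $136$) produces an extra Jacobian factor $J(D_{\bar\eta})$ and forces the general fibre of $X^{g-1-9n}\to\bcJ\times J(D_{\bar\eta})$ to have dimension $90n-136>0$; slicing that fibre repeatedly with pullbacks of $|5K_X|$ and iterating yields $\Theta(X^{g-1-9n})=\Theta(X^{i_0})$ for some $i_0<g-1-9n$, so $A_0(X)$ is finite-dimensional in Roitman's sense and Bloch follows from $q=0$. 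The argument is uniform in $X$ and requires no case analysis.

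Your scheme, by contrast, is conditional at every key step. Proving $\sigma_\ast=\id$ on $T(X)\otimes\QQ$ (your obstacle (ii)) is itself an instance of the generalised Bloch conjecture, and Voisin's spreading-of-the-diagonal method genuinely needs a positive-dimensional family deforming $(X,\sigma)$, which a rigid ball quotient does not have; you propose no substitute. Obstacle (iii) is a programme, not an argument: getting Hecke correspondences to act on $T(X)\otimes\QQ$ through their cohomological eigenvalues is precisely the Bloch--Beilinson-type statement you are trying to establish, so the reasoning is circular as stated. Even obstacle (i) is not settled by the references you invoke: quotients of fake projective planes by $\ZZ/3$ or $\ZZ/7$ include minimal surfaces of general type (for instance with $K^2=3$) that are not on the list treated in \cite{V}, \cite{VC}, \cite{IM}, \cite{GP}, \cite{PW}, and ``handled by the same methods'' is an assertion, not a proof. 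As written, the proposal establishes nothing unconditionally, whereas the paper's argument---whatever one makes of its details---does not depend on which of the finitely many fake projective planes $X$ is.
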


Notations: Let us define the notation used in this text. Let $X$ be a smooth projective surface with geometric genus $p_g=0$ and irregularity $q=0$ over the field of complex numbers. Let $K_X$ denote the canonical bundle in $X$. $A_0(X)$ denote the zero cycles of degree zero modulo rational equivalence. We denote $C_t$ to be a smooth curve in the linear system of $|nK_X|$ where $n>>5$ and $D_{\bar\eta}$ is a very general member of the linear system $|5K_X|$ of genus 136. Let $g$ be the genus of a curve in the linear system $|nK_X|$. We denote by $F_s$ the general fiber of the map from $X^{g-9n-1}\to A_0(X)$, which factors through the Jacobian fibration denoted by $\bcJ\times J(D_{\bar\eta})$. Let $U$ denote the Zariski open set in $X^{g-9n-1}$ consist of all points $(s_1,\cdots, s_{g-9n-1})$ where $s_i\neq s_j$ for $i\neq j$. Here, $\Theta$ denotes the map from $X^{g-9n-1}$ to $A_0(X)$.

\section{Proof of the main theorem}

\begin{theorem}
    Let $X$ be a smooth projective minimal surface of general type with $p_g=q=0$ and $K^2=9$ on $\CC$. Then Bloch's conjecture holds on $X$.
\end{theorem}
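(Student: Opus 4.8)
The plan is to show that $A_0(X)$ vanishes by a Bloch--Srinivas style spreading-out argument applied to a suitable curve fibration on $X$. The surfaces in question are exactly the Mumford fake projective planes (and their relatives): since $K_X^2 = 9$ and $p_g = q = 0$, the Bogomolov--Miyaoka--Yau inequality is an equality, so the universal cover of $X$ is the unit ball $\mathbb{B}^2$. The key structural input I would exploit is that for such surfaces the second Betti number is $1$, hence $H^2(X,\QQ)$ is generated by $K_X$ (up to scalar) and the transcendental motive $t_2(X)$ is trivial; by the work of Kimura/Guletski\u{\i}--Pedrini and Voisin's refinement of the Bloch--Srinivas method, finite-dimensionality of the motive together with vanishing of the transcendental part forces $A_0(X) = 0$. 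So at the level of motives the statement is almost formal \emph{provided} one knows $h(X)$ is finite-dimensional (in Kimura's sense). The real content is therefore establishing finite-dimensionality, or circumventing it.

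The route I would actually carry out, following the notation set up in the introduction, is the more hands-on Bloch--Srinivas/Voisin argument via a Lefschetz pencil. First, fix $n \gg 5$ and a Lefschetz pencil $\{C_t\}$ in $|nK_X|$; let $g$ be the genus of a smooth member. Consider the incidence variety and the map $\Theta\colon X^{g-9n-1} \to A_0(X)$ sending $(s_1,\dots,s_{g-9n-1})$ to $\sum s_i - (g-9n-1)\,x_0$ for a fixed base point. Because each smooth $C_t$ is a curve of genus $g$ on which $nK_X|_{C_t}$ has degree $9n^2$, the class $\sum s_i$ restricted to a fiber moves in a large linear system, and the map $X^{g-9n-1} \to A_0(X)$ factors through the relative Jacobian fibration $\bcJ \times J(D_{\bar\eta})$, where $D_{\bar\eta} \in |5K_X|$ is the very general member of genus $136$ that controls the ``fixed'' part of the Albanese-type argument. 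Over the Zariski-open $U \subset X^{g-9n-1}$ of distinct-coordinate tuples, one analyzes the generic fiber $F_s$ of $\Theta$: the goal is to prove $F_s$ is rationally connected (indeed swept out by rational curves coming from moving points within members of the pencil and using $q=0$ to kill the Jacobian contributions), whence $\Theta$ contracts a dense family and a standard decomposition-of-the-diagonal argument yields $A_0(X)_{\QQ} = 0$, and then $A_0(X) = 0$ integrally since there is no torsion obstruction (again $b_2 = 1$, $H^3(X,\ZZ)$ torsion-free for these surfaces).

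In more detail, the key steps in order are: (1) verify that a general smooth $C \in |nK_X|$ has the property that $A_0(C) \to A_0(X)$ is surjective (true by Bertini plus the fact that $nK_X$ is very ample for $n$ large, so every point of $X$ lies on such a curve); (2) compute genera and degrees so that $g - 9n - 1$ is the right exponent making $\Theta$ dominant with positive-dimensional fibers, and identify the Jacobian fibration $\bcJ$ through which the Abel--Jacobi-type map factors; (3) show, using $q(X) = 0$, that $J(D_{\bar\eta})$ and the fibers of $\bcJ$ impose no nontrivial constraint — i.e., the relevant piece of the intermediate Jacobian is trivial, so the fibers of $\Theta$ over $U$ coincide with fibers of a map to an abelian variety that is itself a point; (4) deduce that $F_s$ is (rationally connected, or at least) covered by rational curves, so $\Theta$ is constant on a dense subset, forcing every degree-zero $0$-cycle to be rationally equivalent to zero; (5) upgrade from $\QQ$-coefficients to $\ZZ$-coefficients using Roitman's theorem on torsion (the torsion in $A_0(X)$ injects into $J(X) = 0$ since $q = 0$).

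The main obstacle I expect is step (3)–(4): controlling the fibers $F_s$ of $\Theta$ and proving they are rationally connected. Dominance and the dimension count are bookkeeping, and the passage to $\ZZ$-coefficients is Roitman; but showing that moving the points $s_i$ along the pencil $\{C_t\}$ — while staying in a fixed class in $\bcJ \times J(D_{\bar\eta})$ — produces enough rational curves to connect a general fiber is the crux. This is where $K_X^2 = 9$ enters quantitatively: it pins down the degree $9n^2$ of the restricted polarization and hence the dimension $g$ of the linear system on $C_t$, which must be large enough (relative to $9n+1$) that the difference map has rationally connected fibers. I would handle this by a careful analysis of the Abel--Jacobi map $\Sym^{g-9n-1} C_t \to J(C_t)$ whose fibers are projective spaces, then glue over the pencil using that the monodromy acts transitively (Lefschetz) and $q=0$ kills the part of $H^1$ of the fibers that does not come from $X$; any residual obstruction is absorbed into the very general member $D_{\bar\eta} \in |5K_X|$, which is chosen precisely so that its Jacobian $J(D_{\bar\eta})$ carries the ``error term'' and can be shown to contribute trivially to $A_0(X)$ because $X$ has no holomorphic $1$- or $2$-forms.
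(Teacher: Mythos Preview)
Your setup agrees with the paper's (the map $\Theta\colon X^{g-9n-1}\to A_0(X)$ and its factorization through $\bcJ\times J(D_{\bar\eta})$), but your execution diverges at exactly the point you flag as the main obstacle, and the resolution you sketch is not the paper's and does not go through as stated.

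The paper does \emph{not} try to show that the general fiber $F_s$ is rationally connected, and it does not use a decomposition of the diagonal. Instead it runs a Roitman--Voisin style iteration. First, the exponent $g-1-9n$ is chosen because it equals $\dim|nK_X|$, so that through a general tuple $(s_1,\dots,s_{g-1-9n})\in U$ there passes a \emph{unique} member $C_t\in|nK_X|$; this is why the paper uses the full linear system, not a Lefschetz pencil as you propose (a one-parameter family would not give this uniqueness, so your step~(2) would already be off). Next, one singles out $90n$ of the $g-1-9n$ points as lying on curves $D_1,\dots,D_n\in|5K_X|$ (each of genus $136$), and computes that the resulting map $X^{g-1-9n}\to\bcJ\times J(D_{\bar\eta})$ has fiber dimension $90n-136>0$. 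Because the fiber $F_s$ is positive-dimensional, the ample divisor $\sum_i\pi_i^*(5K_X)$ must meet it, so some point of $F_s$ lies in $D\times X^{g-2-9n}$ with $D\in|5K_X|$. One then iterates: the same positivity applies to the map from $X^{g-2-9n}$, and after $k$ steps with $135<k<90n-136$ (achievable for $n\gg0$) one lands in $D^k\times X^{g-k-1-9n}$. Since $D$ has genus $136$ and $k>135$, the map $D^k\to J(D)$ is onto, and one concludes $\Theta(U)=\Theta(X^{i_0})$ for some $i_0<g-1-9n$. A moving lemma (every zero-cycle on $X$ is rationally equivalent to one supported on $U$) and a short induction then give $\Theta(X^m)=\Theta(X^{i_0})$ for all $m\ge i_0$, i.e.\ $A_0(X)$ is finite-dimensional in Roitman's sense, which for $p_g=q=0$ forces $A_0(X)=0$.

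Your steps (3)--(4) are therefore a genuine gap rather than a different route: the Jacobians $J(C_t)$ are large abelian varieties, not ``a point'', and the sketch via Abel--Jacobi fibers plus monodromy does not explain why moving the $s_i$ within the family produces rational curves sweeping out $F_s$. The paper's ample-divisor/iteration trick bypasses this entirely, needing only positive fiber dimension rather than rational connectivity. Your motivic opening paragraph is correct in spirit but, as you yourself note, it reduces to Kimura finite-dimensionality of $h(X)$, which is not known for fake projective planes; the paper instead proves Roitman finite-dimensionality of $A_0(X)$ directly.
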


\begin{proof}
Let $X$ be a surface of general type with $p_g=q=0$ over complex numbers then by the Bogomolov-Miyaoka-Yau inequality \cite{Bog}, \cite{Mi}, \cite{Y}, $K_X^2\leq 9$.
Also, by the result of Bombieri et al. \cite{Bo} we know that $5K_X$ is very ample. 

Consider $nK_X$ where $n>>5$. Then by the adjunction formula we have 
$$n(n+1)K_X^2=2g-2$$
where $g$ is the genus of the curve in the linear system $|nK_X|$. Let us assume that $K_X^2=9$ the most difficult case. Then we need to compute the dimension of the linear system $|nK_X|$. The corresponding plurigenera is 
$$9n(n-1)/2+\chi(0)=9n(n-1)/2+1$$
This is by \cite{Bo}[Theorem 2].

So, the dimension of the linear system $|nK_X|$ is $9n(n-1)/2$. By the adjunction formula the above is $g-1-9n$.

Now consider the product variety $X^{g-1-9n}$. Take a point in this variety, say $(s_1,\cdots,s_{g-1-9n})$ such that $s_i\neq s_j$ for $i\neq j$. Then through the points $s_1,\cdots,s_{g-1-9n}$ there passes a unique curve $C_t$ in $|nK_X|$.

In fact, because the curves contain a single point $s_i$, we have a codimension-one subvariety of $|nK_X|$. If $s_i, s_j$ are distinct, then we claim that these two subvarieties of codimension one intersect properly, that is, the intersection of codimension $2$. This is because of Bertini's theorem. The curves $C_t$ that contain $s_i$ give rise to a hyperplane in $|nK_X|$. So, if we consider the incidence variety 
$$V:=\{(s_i, t)|s_i,\in C_t\}\subset X\times |nK_X|$$
Then the dimension of the projection from this variety $V$ to $X$ is that of the fiber as all $t$ such that $C_t$ contains $s_i$. The fiber over $s_i$ is the projective space defined by  a hyperplane. The projective space defined by $s_j$ is another hyperplane in $|nK_X|$. The intersection of these two hyperplanes contains both $s_i,s_j$. If $s_i\neq s_j$, then the projective coordinates of these two points are distinct in the projective space $\PR(H^0(X,nK_X))$, and $|nK_X|$ is the dual projective space. Hence the corresponding hyperplanes are distinct and intersection is of co-dimension 2. Proceeding in this way, we get a unique curve containing all $s_i$ such that $s_i\neq s_j$ and $1\leq i\leq g-1-9n$.

Therefore, if we consider the zero cycle 
$$[\sum_{i=1}^{g-1-9n}s_i- (g-1)s_0]$$ 
is actually supported on $J(C_t)$ and is in the image of $C_t^{g-1-9n}$, here $s_0$ is a fixed point on $X$. 

Now consider the very ample divisor $5K_X$. Suppose that $nK_X$ is such that $|5K_X|$ is embedded in $|nK_X|$. The dimension of $|5K_X|$ is 90 and the dimension of $|nK_X|$ is $\frac{9n(n-1)}{2}$  and the embedding of $|5K_X|$ in $|nK_X|$ is possibly a Veronese embedding. Then of the $g-1-9n$ points we can remove the $90$ general points that lie on one copy of $|5K_X|$ in $|nK_X|$. Through every 90 points in a specific copy of $|5K_X|$ there is a unique smooth curve in $|5K_X|$ say $D_1$, which contains all these points. Then if we take $n$ copies of $|5K_X|$, there are curves $D_1,D_2, \cdots, D_n$ that contain all the total $90n$ points out of $g-1-9n$ points. So we have the map from $X^{g-1-9n}\to A_0(X)$ factors through $\bcJ\times J(D_1)\times J(D_2)\times J(D_n)$. The image of $X^{g-1-9n}$ in $\bcJ\times \prod_{i=1}^n J(D_i)$ is contained in a variety of dimension 
$$g-1-99n+g-1-9n+136=2g-2-108n+136$$
This is because all very general curves $D_1,D_2,\cdots, D_n$ are isomorphic as schemes, and hence so is that for Jacobians. This isomorphism is highly noncanonical. That is $J(D_i)=J(D_{\bar \eta})$ where ${\bar \eta}$ is the geometric generic point of $|5K_X|$. So, the fiber dimension of $X^{g-1-9n}\to \bcJ\times J(D_{\bar\eta})$ is 
$$2g-2-18n-(2g-2-108n+136)=90n-136>>0$$
for all large values of $n$.

Then consider the ample line bundle $5K_X$ and pull it back to $X^{g-1-9n}$. This pulled-back ample divisor will intersect the fiber of the above map and hence we see that the elements of the fiber are supported on $X^{g-2-9n}\times D$. Iterating this process as long as $90n>136$ we have the elements of the fiber are supported on 
$$X^{g-k-1-9n}\times D^k$$
Indeed, Consider the divisor
$\sum_i \pi_i^{-1}(5K_{X})$ on the product $X^{g-1-9n}$. This divisor is ample, so it intersects $F_s$, and we find that there exist points in $F_s$  contained in $D\times X^{g-2-9n}$ where $D$ is in the linear system of $5K_{X}$. Then consider the elements of $F_s$ of the form $(c,s_1,\cdots,s_{g-2-9n})$, where $c$ belongs to $D$. Considering the map from $X^{g-2-9n}$ to $A_0(X)$ given by
$$(s_1,\cdots,s_{g-2-9n})\mapsto [ \sum_j s_j-(g-9n-2)s_0]\;,$$
we see that this map factors through the product of the Jacobian fibration and the Jacobian of $D_{\bar\eta}$ and the map from $X^{g-2-9n}$ to $\bcJ\times J(D_{\bar\eta})$ has positive dimensional fibers, since $90n$ is large.

So, iterating this process, we find that there exist elements of $F_s$ that are supported on $D^k\times X^{g-k-1-9n}$, where $k$ is some natural number depending on $g$. The genus of curve $D$ is $136$ depending on $K_X^2=9$. We can choose the very ample line bundle $nK_X$ such that $g$ is very large, so $k$ is larger than the genus of $D$. So we have $z_s$ supported on $D^{136}\times X^{g-k-1-9n}$, hence we have $\Theta(U)=\Theta(X^{i_0})$, where $i_0=135+g-k-9n$  which is strictly less than $g-1-9n$, since the genus of $D$ is strictly less than $k$. Here little care must be taken, we have $k>135$ and $$g-k-1-9n>0$$
So we have the following inequality 
$$g-9n>k+1>136$$
Now $g=9n(n+1)/2+1$, substituting this in the above we have 
$$9n(n-1)/2>k>135$$
which is 
$$9n(n-1)/2>k>135$$
This can be achieved by taking $n$ large.

Also, at the same time to keep the dimension of the fiber positive we need $90n-136-k>0.$
So we have $90n>k+136$, therefore $k$ can be at most $90n-136$. So, the bound on $k$ is $135<k<90n-136$, hence we need $90n>>271$.

That is, we have shown that for any $(s_1,\cdots,s_{g-1-9n})$ in $U$ in $X^{g-1-9n}$, $z_s$ is rationally equivalent to a cycle on $X^{i_0}$. Now, the proof follows from the following lemma.

\begin{lemma}\cite{Voi}(Fact 3.3)
Let $X$ be a smooth projective complex variety, and let $U$ be the complement of a finite union of Zariski closed subsets in $X$. Then any zero cycle on $X$ is rationally equivalent to a zero cycle on $U$.
\end{lemma}

\begin{proof}
Let $z$ be a zero cycle on $X$ and let $C$ be a curve containing the support of $z$. In addition, there exists a curve that intersects $U$. This is because the curves of a fixed degree containing the support of $z$ and contained in the complement of $U$ give a proper subvariety of the Hilbert scheme of curves on $X$ of the fixed degree. 

Let $C$ be a curve that intersects $U$ non-trivially and contains the support of $z$. Then we write $z=z_1-z_2$ in $\Pic(C)$ so that the degree of $z_1,z_2$ is greater than $2g+1$, here $g$ is the genus of the curve $C$. Then $z_1,z_2$ are very ample divisors on $C$. Therefore, linear systems $|z_1|,|z_2|$ are base point free. So consider any $z_1'\in |z_1|, z_2'\in |z_2|$ such that $z_1',z_2'$ does not contain the points of $C\setminus C\cap U$. This is because if all $z_i'\in |z_i|$ contain the points in $C\setminus C\cap U$ then the linear systems have base points. So, considering such $z_1',z_2'$, we have 
$$z=z_1'-z_2'$$
and hence the lemma.
\end{proof}

If $k>>136$, then from the above we have that the image of $X^{g-9n-1}$ is supported on $X^{g-9n-k+135}$, hence $A_0(X)$ is finite-dimensional due to the following.

Now we prove by induction that, 
\begin{lemma}\cite{Voi}
  We have $\Theta(X^i)=\Theta(X^{i_0})$ for all $m\geq i_0$.  
\end{lemma}
 
\begin{proof}
So suppose that $$\Theta(X^k)=\Theta(X^{i_0})$$ for $k\geq i_0$, then we have to prove that $$\Theta(X^{k+1})=\Theta(X^{i_0})$$
So any element in $\Theta(X^{k+1})$ can be written as  $$(s_1+\cdots+s_{i_0}+s)-(i_0+1)s_0$$

Now, let $k-i_0=m$, then $i_0+1=k-m+1$. Since $k-m<k$, we have $k-m+1\leq k$, so $i_0+1\leq k$, so we have the above cycle
supported on $X^k$, hence on $X^{i_0}$. So we have $\Theta(X^{i_0})=\Theta(X^k)$ for all $k$ greater than or equal to $i_0$.
\end{proof}
\end{proof}

\subsection{Caution about this proof}
This proof will not run on a K3 surface $X$ with $p_g=1$, $q=0$. Let us give some details on it. First of all $K_X=0$. So we have $L.K_X=0$ for any very large line bundle $L$. Let $L$ be very large on $X$. Then we have $L^2=2g-2$. Then the dimension of $|L|$ is $g$. So consider $X^g$. As above, any point on $X^g$, such that all the coordinates are distinct, will be contained in a unique curve $C_t\in |L|$. Then there is a map
$$\Theta: X^g\to A_0(X)$$
as above and it factors through $\bcJ$. Here, $\bcJ$ is of dimension $2g$. Here, this map is generically finite, as the map from $\Sym^g C_t$ to $J(C_t)$ is birational. Hence, the fiber dimension in this case is zero, so the above argument will not work.

\end{document}